\theoremstyle{plain}
\newtheorem{thm}{Theorem}[section]
\newtheorem{cor}[thm]{Corollary}
\newtheorem{lmm}[thm]{Lemma}
\newtheorem{prp}[thm]{Proposition}
\newtheorem{que}[thm]{Question}
\newtheorem*{mthm}{Main Theorem}
\newtheorem*{app}{Application}
\theoremstyle{definition}
\newtheorem{defn}[thm]{Definition}
\newtheorem{rem}{Remark}
\newtheorem{eg}[thm]{Example}
\theoremstyle{definition}
\numberwithin{equation}{section}
\newcommand{\C}{\mathbb{C}}
\newcommand{\cO}{\mathcal{O}}
\newcommand{\cH}{\mathcal{H}}
\def \hf{\hspace*{0.5cm}}
\begin{document}
\title[An interpolation problem and its applications]{A regular interpolation problem and its applications}
\author[N. Das]{Nilkantha Das}

\address{Stat-Math Unit, Indian Statistical Institute, 203 B.T. Road, Kolkata 700 108, India.}
\email{dasnilkantha17@gmail.com}
\subjclass[2020]{ 14M05, 14R10, 13B25, 32C15}
\keywords{Affine variety, Factorial variety, Almost surjective morphism, Analytic space}

\begin{abstract}
In this article, we prove the following interpolation problem: if the composition of a function and a regular map between affine varieties is a regular function, then there exists a global regular function of the target variety that coincide with the function on the image of the regular map provided the target variety is factorial and the regular map is almost surjective. We also discuss a few applications of the interpolation problem.
\end{abstract}

\maketitle
\tableofcontents

\section{Introduction}\label{intro}
Let $k$ be an algebraically closed field of characteristic zero, and $\Phi:X \longrightarrow Y$ be a regular map between affine varieties $X$ and $Y$. Consider the following problem:
\begin{que}
If $f \circ \Phi$ is a regular function on $X$ for some function $f$ on $Y$, is $f$ necessarily a regular function on $Y ?$
\end{que}
\noindent The answer to the above problem is negative in general (\Cref{counter-example}) as we don't have control outside $\text{Im}(\Phi)$, the range of $\Phi$. It is natural to ask the following:
\begin{que}
If $f \circ \Phi$ is a regular function on $X$ for some function $f$ on $Y$,  does there exist a regular function $g$ on $Y$ such that $f\mid_{\text{Im}(\Phi)}=g \mid_{\text{Im}(\Phi)}?$ 
\end{que}
In other words, the above question asks if $f$ can be interpolated over $\text{Im}(\Phi)$ by a regular function on $Y$. Throughout this article, we call this problem as interpolation problem. The interpolation problem does not always have an affirmative answer in general (\Cref{counter-example}) as the size of $\text{Im}(\Phi)$ with respect to $Y$ may be too small. To make this precise, we extend the definition of being \textit{almost surjective} from \cite[Definition 2.1]{Aichinger} to arbitrary varieties.
\begin{defn}
Let $X,Y$ be two affine varieties over $k$. A regular map $\Phi \colon X \longrightarrow Y$ is said to be \textit{almost surjective} if the dimension of the Zariski closure of the complement of image of $\Phi$ is at most $\dim Y -2$, that is, $\dim \overline{\left(Y \setminus \text{Im}(\Phi ) \right)} \leq \dim Y -2$. 
\end{defn} 
\noindent The main result of the paper states that if $Y$ is factorial and $\Phi$ is almost surjective, the interpolation problem has an affirmative answer. 
\begin{mthm}[\Cref{fun_comp_corollary}]
Let $X,Y$ be two affine varieties over an algebraically closed field $k$ of characteristic zero such that $Y$ is factorial, and $\Phi : X \longrightarrow Y$ be an almost surjective morphism. Furthermore, assume $h : Y \rightarrow k $ is a function such that $h \circ \Phi$ is a regular function on $X$. Then there exists a regular function $p:Y \rightarrow k$ such that $p(y)=h(y)$ for all $y \in \Phi (X)$.
\end{mthm}
Our result is a generalization of a result of Aichinger (\cite[Theorem 4.1]{Aichinger}) who proved it for affine spaces. It is worthwhile to note that if $X=Y=\mathbb{A}^1_{\mathbb{C}}$, this interpolation problem can be proved using one variable complex calculus (cf. \cite[Exercise 14, Chapter 10]{Rudin}).
\begin{eg}\label{counter-example}
Let $\Phi: \mathbb{A}^1_{k} \longrightarrow \mathbb{A}^2_{k}$ be given by $\Phi (t)=(t^2,t^3)$, and $f:\mathbb{A}^2_{k} \longrightarrow k$ be given by
\begin{align*}
f(x,y)= \begin{cases}
y/x & \text{ if } x \neq 0,\\
0 & \text{ otherwise.}
\end{cases}
\end{align*}
Here $f \circ \Phi$ is a regular function, but $f$ can't be interpolated by a polynomial in two variables.
\end{eg}
\noindent \textbf{Notation:}  Throughout the article, we always assume $X\subseteq \mathbb{A}^m_k$ and $Y \subseteq \mathbb{A}^n_k$ are two affine varieties with the algebraic structure sheaves $\cO_X$ and $\cO_Y$, respectively. By affine variety, we mean an irreducible algebraic set. The field $k$ is always assumed to be an algebraically closed field of characteristic zero, unless otherwise specified. The coordinate rings of $X$ and $Y$ are denoted by $A(X)$ and $A(Y)$, respectively. The morphism $\Phi:X \rightarrow Y$ is defined by $\phi_1, \ldots ,\phi_n$, where $\phi_i \in A(X)$ for $1 \leq i \leq n$. We occasionally deal with the complex varieties. For a complex algebraic variety $(X,\cO_X)$, the associated analytic space is denoted by $(X^{\text{an}},\cH_X) $. The open sets in the Zariski topology and analytic topology will be denoted by Zariski open and open, respectively, in order to distinguish the topologies as well. For a sheaf $\mathcal{F}$ on a topological space $X$, the ring associated to an open subset $U$ of $X$ is denoted by $\Gamma\left( U,\mathcal{F}\right)$. \\
\hf Let us analyze the interpolation problem in a different way. We say a regular function $g$ on $X$ is \textit{determined by $\Phi$} (cf. \cite{Aichinger}), if there is a function $h : Y \rightarrow k$ (not necessarily regular) such that $h \circ \Phi=g$. The set of all such regular functions that are determined by $\Phi$ is a $k$-subalgebra of $A(X)$; it is denoted by $A\langle \Phi \rangle$. On the other hand, let us denote the image of $A(Y)$ via the induced $k$-algebra morphism $\Phi^*: A(Y) \longrightarrow A(X)$ by $A[\Phi]$. Observe that affirmative answer to the interpolation problem is the same amount of saying that $A[\Phi]= A\langle \Phi \rangle$. Note that $A[\Phi] \subseteq A\langle \Phi \rangle$, but the reverse inclusion does not hold in general. We show that if  $Y$ is factorial and $\Phi$ is almost surjective,  then $A[ \Phi ]$ and $ A\langle \Phi \rangle$ are the same (\Cref{fun_comp_thm}). An example is also discussed in \Cref{Function_composition_section} which assert almost surjectivity of $\Phi$ is essential in order to prove the equality of two subalgebras. In fact, later in \Cref{Function_composition_section}, it is proved that an affirmative answer to the interpolation is equivalent to almost surjectivity of $\Phi$ under some additional hypothesis (\Cref{equivalent_interpolation_problem}). \\
\hf The interpolation problem has a few important and useful applications. In 1969, Ax  \cite{Ax} proved that any injective endomorphism of $X$ is an automorphism. Since then, this theorem had taken a central role in the affine algebraic geometry (for example, \cite{gromov,EGA-IV}). Several generalizations of the above result of Ax have been proposed. For example, Miyanishi proposed a conjectural generalization by weakening the injectivity hypothesis which has been partially confirmed by Kaliman \cite{Kaliman} and the author \cite{das}. We consider a generalization of the  theorem of Ax in a different direction. Our object of study is injective regular morphisms of affine varieties (not necessarily endomorphisms). As an application of the interpolation problem, we show the following:
\begin{app}[\Cref{set_th_criteria}]
Let $X,Y$ be affine varieties over $k$ such that $Y$ is factorial, and $\Phi:X \longrightarrow Y$ be a regular morphism. Then $\Phi$ is biregular if and only if it is injective and almost surjective.
\end{app}

\noindent This is in some sense a generalization of the result of Ax as set theoretic properties determine biregularity, a structural property. It is worthwhile to note that the above result immediately follows from the Zariski main theorem which states that a birational and quasi-finite morphism from an algebraic variety to a normal variety is an open immersion. Indeed $\Phi$, being injective, is quasi-finite and birational. The Zariski main theorem and Hartog's theorem about the extension of regular functions to the points of codimension at least 2 in normal varieties together imply that $\Phi$ is a surjective open immersion, and hence biregular. However, we prove \Cref{set_th_criteria} in a different approach (as an application of the interpolation problem). \\
\hf It is well known that a morphism $\Phi:X \longrightarrow Y$ of affine varieties is biregular if and only if the induced ring map $\Phi^*: A(Y) \longrightarrow A(X)$ is an isomorphism. If the base field is $\mathbb{C}$, the field of complex numbers, as another application of the interpolation problem, an analytic analog of the above result is developed. 
\begin{app}[\Cref{analytic_criterion_theorem}]
Let $X,Y$ be two complex affine varieties such that $Y$ is factorial. A regular map $\Phi : X \rightarrow Y$ is biregular if and only if the induced $\C$-algebra morphism $$\left(\Phi^{\text{an}} \right)^* : \Gamma (Y^{\text{an}}, \cH_Y) \longrightarrow \Gamma (X^{\text{an}},\cH_X)$$ is an isomorphism.
\end{app}
It is well known that a regular morphism of affine varieties is biregular if and only if the induced $\C$-algebra homomorphism between coordinate rings is an isomorphism. \Cref{analytic_criterion_theorem} can be thought of as an analytic analog that.

\section{Almost surjective morphisms of affine varieties}\label{Function_composition_section}
In this section, some properties of almost surjective morphisms of affine varieties are studied. Throughout this section, $k$ is always assumed to be an algebraically closed field of characteristic zero, unless specified otherwise. All the notations used here have usual meaning as defined in the previous section. Recall $\Phi:X \longrightarrow Y$ is always assumed to be a regular map,  $A \langle \Phi \rangle$ denotes the set of regular functions on $X$ that are determined by $\Phi$, and $A[\Phi ]$ denotes the image of $\Phi^*$ in $A(X)$. We start with the following property of almost surjective morphisms. The following lemma and its proof are generalizations of \cite[Lemma 3.2]{Aichinger}.
 \begin{lmm}\label{closure_lemma}
Let $X,Y$ be two affine varieties over $k$ such that $Y$ is factorial, and $\Phi : X \longrightarrow Y$ be an almost surjective morphism. Also let $g \in A \langle \Phi \rangle$. Consider the set $$D := \{ \left(\phi_1(x),\ldots ,\phi_n(x),g(x)\right) \in Y \times \mathbb{A}^1_k : x \in X \}.$$ Then $\dim \overline{D}=\dim Y$.
\end{lmm}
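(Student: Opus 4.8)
The plan is to realize $D$ as the closure of the image of the morphism $\Psi := (\phi_1,\ldots,\phi_n,g)\colon X \longrightarrow Y \times \mathbb{A}^1_k$, so that $\overline{D}=\overline{\mathrm{Im}(\Psi)}$, and then to compare it with $\mathrm{Im}(\Phi)$ via the projection $\pi\colon Y\times\mathbb{A}^1_k \to Y$. The whole point is that, because $g$ is determined by $\Phi$, the set $D$ is really the graph of a (possibly non-regular) single-valued function over $\mathrm{Im}(\Phi)$, and a graph cannot have a one-dimensional fibre over a generic point of $Y$.

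First I would establish the lower bound $\dim\overline{D}\ge \dim Y$. Since $\Phi$ is almost surjective, its image is dense in $Y$: otherwise $\overline{\mathrm{Im}(\Phi)}$ would be a proper closed subvariety, and its open complement, of dimension $\dim Y$, would lie inside $Y\setminus\mathrm{Im}(\Phi)$, contradicting $\dim\overline{(Y\setminus\mathrm{Im}(\Phi))}\le \dim Y-2$. As $\pi(D)=\mathrm{Im}(\Phi)$ is dense in $Y$, the restriction $\pi|_{\overline{D}}\colon \overline{D}\to Y$ is dominant, whence $\dim\overline{D}\ge\dim Y$. Since $\overline{D}$ is a closed subvariety of $Y\times\mathbb{A}^1_k$ we also have $\dim\overline{D}\le \dim Y+1$, so it suffices to rule out the case $\dim\overline{D}=\dim Y+1$.

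Next, suppose $\dim\overline{D}=\dim Y+1$. As $X$ is irreducible, $\overline{D}$ is an irreducible closed subset of the irreducible variety $Y\times\mathbb{A}^1_k$ of the same dimension, so $\overline{D}=Y\times\mathbb{A}^1_k$; equivalently $\Psi$ is dominant and $g$ is transcendental over the subfield $\Phi^*k(Y)\subseteq k(X)$. By Chevalley's theorem $\mathrm{Im}(\Psi)$ is constructible and dense, hence contains a dense Zariski open set $U\subseteq Y\times\mathbb{A}^1_k$. Consequently there is a dense open $V\subseteq Y$ such that for each $y\in V$ the line $\{y\}\times\mathbb{A}^1_k$ meets $U$ in a nonempty open subset, which is cofinite and therefore infinite because $k$ is infinite.

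Finally I would extract the contradiction from $g\in A\langle\Phi\rangle$. Fix $y\in V$; by the previous step there are distinct values $t_1\ne t_2$ with $(y,t_1),(y,t_2)\in\mathrm{Im}(\Psi)$, so there exist $x_1,x_2\in X$ with $\Phi(x_1)=\Phi(x_2)=y$ but $g(x_1)=t_1\ne t_2=g(x_2)$. Writing $g=h\circ\Phi$ for some $h\colon Y\to k$ gives $g(x_1)=h(y)=g(x_2)$, a contradiction. Hence $\dim\overline{D}=\dim Y+1$ is impossible, and together with the lower bound this yields $\dim\overline{D}=\dim Y$. The one delicate point — and the only place the defining property of $A\langle\Phi\rangle$ is used — is this last step, where the graph structure of $D$ clashes with the existence of two distinct second coordinates over a single generic $y$; note that factoriality and the full codimension-two hypothesis play no role here, and mere density of $\mathrm{Im}(\Phi)$ would suffice.
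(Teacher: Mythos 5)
Your proof is correct, and its overall strategy coincides with the paper's: establish $\dim\overline{D}\ge\dim Y$ from dominance of $\Phi$, then assume $\overline{D}=Y\times k$ and derive a contradiction by exhibiting a point $\textbf{y}\in\mathrm{Im}(\Phi)$ whose fibre in $D$ contains two distinct second coordinates, which is incompatible with $g=h\circ\Phi$. Where you differ is in how that point is produced. The paper invokes the Closure Theorem to write $\overline{D}=D\cup W$ with $\dim W<\dim\overline{D}$ and then runs a two-case analysis on $\dim\overline{\pi(W)}$, the harder case requiring Noether normalization to produce a polynomial relation whose leading coefficient cuts out the locus where the fibre of $W$ is infinite. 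You instead use Chevalley's theorem plus the standard fact that a dense constructible subset of an irreducible variety contains a dense open set $U$, and then observe that the locus of $y$ whose entire line $\{y\}\times\mathbb{A}^1_k$ avoids $U$ is a proper closed subset (it is the intersection over $t\in k$ of the closed sets $\{y:(y,t)\notin U\}$, and cannot be all of $Y$). This collapses the paper's case analysis into one uniform step and avoids the normalization argument entirely; the only thing you should spell out is that one-line verification that the bad locus is proper and closed, which you currently assert with ``Consequently.'' Your closing remark that only density of $\mathrm{Im}(\Phi)$ is used, and not factoriality or the full codimension-two hypothesis, is accurate and consistent with the paper, where those hypotheses are consumed only in the subsequent lemmas.
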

\begin{proof}
Let us assume that $\dim X=m_1$ and $\dim Y=n_1$. We also assume  $X\subseteq \mathbb{A}^m_k$ and $Y \subseteq \mathbb{A}^n_k$.\\
\hf Observe that the graph of the morphism $X\longrightarrow \mathbb{A}_k^{n+1}$, given by 
$$\textbf{x}=(x_1,\ldots ,x_m) \mapsto (\phi_1(\textbf{x}), \ldots , \phi_n(\textbf{x}),g(\textbf{x})),$$ 
is a closed algebraic subset of $\mathbb{A}_k^{m+n+1}$, and $D$ is the image of that algebraic subset under the natural projection $\pi_{n+1}: \mathbb{A}_k^{m+n+1} \longrightarrow \mathbb{A}_k^{n+1}$ onto the last $n+1$ components. By the Closure Theorem \cite[Theorem $1$, p.~258]{Cox_OShea}, there exists an algebraic set $W$ inside $Y\times \mathbb{A}^1_k$ such that $\overline{D}= D \cup W$ and $\dim W < \dim \overline{D}$. Observe that $\text{Im}(\Phi )=\pi(D)$, where $\pi : Y \times \mathbb{A}^1_k \rightarrow Y$ is the natural projection map. Note that $\Phi$, being almost surjective, is dominant. Therefore $\dim \overline{\pi(D)}=n_1$, and hence $\dim \overline{D} \geq n_1$. The latter inequality follows from the fact that $\dim V \geq \dim \overline{\pi(V)}$ for every algebraic set $V$ in $Y \times \mathbb{A}^1_k$.\\
\hf Assume on the contrary that $\dim \overline{D}=n_1+1$, that is, $ \overline{D}= Y \times \mathbb{A}^1_k$. Then $\dim \overline{\pi( W)} \leq \dim W \leq n_1$. We consider two different cases.\\
\hf First assume $\dim \overline{\pi( W)} = n_1$. In this case, $\overline{\pi( W)}=Y$, and hence the ideal of the algebraic set $\overline{\pi( W)}$ will be $I(Y)$, that is, $I(W)\cap k[X_1, \ldots ,X_{n}]= I(Y)$.
Since $\dim Y=n_1$, by Noether's normalization lemma there exist $n_1$ regular functions $h_1, \ldots,h_{n_1}$ on $Y$ which are algebraically independent over $k$. Therefore the subset $\{ h_1+I(W), \ldots ,h_{n_1}+I(W) \}$ of $A(Y)[X_{n+1}]/I(W)$, is algebraically independent over $k$ as well. Since $\dim W = n_1$, the set $\{ h_1+I(W), \ldots ,h_{n_1}+I(W), X_{n+1}+I(W) \}$ is algebraically dependent over $k$. Hence, there exists a polynomial $p$ of $n_1+1$ variables with coefficients from $k$ such that $p(h_1,\ldots ,h_{n_1},X_{n+1}) \in I(W)$. Let $q(x_1, \ldots ,x_n)\in A(Y)$ be the non-zero leading coefficient of $p(h_1,\ldots ,h_{n_1},X_{n+1}) \in A(Y)[X_{n+1}]$. Let $(y_1, \ldots ,y_n) \in Y$ be such that $q (y_1, \ldots ,y_n) \neq 0$, then there exist only finitely $z \in k$ such that $ (y_1, \ldots ,y_n,z) \in W$. Since $\overline{D}=Y \times k$ and $\overline{D}= D \cup W$, we conclude that $ (y_1, \ldots ,y_n,z)  \in D$ for all but (at most) finitely many $z$, which contradict the fact that $g$ is determined by $\Phi$.\\
\hf Next assume $\dim \overline{\pi( W)} < n_1$. In this case, $\overline{\pi( W)}$ is a proper subset of $Y$, and hence we can take $(y_1, \ldots ,y_n) \in Y \setminus \pi(W)$. Then for all $z \in k$, $(y_1, \ldots ,y_n,z) \in Y \times \mathbb{A}^1_k = \overline{D}$, but not in $W$. Therefore $(y_1, \ldots ,y_n,z) \in D$, which again contradict that $g$ is determined by $\Phi$.  
\end{proof}
Note that the $k$-subalgebra $A[\Phi ]$ is an integral domain; let us denote its field of fraction by $A\left( \Phi \, \right)$. The inclusion $A[\Phi ] \subseteq A(X) \cap A\left( \Phi \right)$ always holds. But the reverse inclusion does not hold in general. However, if we assume almost surjectivity of $\Phi$, the reverse inclusion also holds. This is a generalization of the implication $(i) \Rightarrow (ii)$ of \cite[Lemma 2.4]{Aichinger} indeed. In fact, later (in \Cref{equivalent_almost_surjective_lemma}) we show that this equality together with other assumptions is equivalent to the almost surjectivity of $\Phi$. 
\begin{lmm}\label{almost_surjective_lemma}
Let $X,Y$ be two affine varieties over $k$ such that $Y$ is factorial, and $\Phi : X \longrightarrow Y$ be an almost surjective morphism. Then $A(X) \cap A\left( \Phi \right)=A[\Phi ]$.
\end{lmm}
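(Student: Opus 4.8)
The plan is to establish the nontrivial inclusion $A(X) \cap A(\Phi) \subseteq A[\Phi]$, the reverse having already been noted. Let $g \in A(X) \cap A(\Phi)$. Since $\Phi$ is almost surjective it is dominant, so $\Phi^*$ is injective and one may regard $A[\Phi] \subseteq A(X) \subseteq k(X)$ with $A(\Phi) = \mathrm{Frac}(A[\Phi])$; thus $g = \Phi^*(a)/\Phi^*(b)$ in $k(X)$ for some $a, b \in A(Y)$ with $b \neq 0$. Here I would invoke that $Y$ is factorial: because $A(Y)$ is a UFD, I can cancel common prime factors and arrange that $a$ and $b$ are coprime. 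The whole problem then reduces to showing that the denominator $b$ is a unit in $A(Y)$, for then $a/b \in A(Y)$ and $g = \Phi^*(a/b) \in A[\Phi]$.

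To this end I would clear denominators: since $g$ is genuinely regular on $X$, the identity $g = \Phi^*(a)/\Phi^*(b)$ becomes the regular-function equation $(b \circ \Phi)\, g = a \circ \Phi$ in $A(X)$. The key consequence is a pointwise vanishing statement: at every $x \in X$ with $b(\Phi(x)) = 0$ the left-hand side vanishes, forcing $a(\Phi(x)) = 0$ as well. Suppose, for contradiction, that $b$ is not a unit, and let $p$ be a prime factor of $b$ in $A(Y)$, defining the irreducible hypersurface $H = V(p)$ of dimension $\dim Y - 1$.

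This is where almost surjectivity is essential. Since $\dim \overline{Y \setminus \mathrm{Im}(\Phi)} \le \dim Y - 2 < \dim H$ and $H$ is irreducible, $H$ cannot be contained in $\overline{Y \setminus \mathrm{Im}(\Phi)}$; hence $H \setminus \overline{Y \setminus \mathrm{Im}(\Phi)}$ is a nonempty open, and therefore dense, subset of $H$ that is contained in $\mathrm{Im}(\Phi)$. For any $y$ in this dense subset, choosing $x \in X$ with $\Phi(x) = y$ gives $b(y) = 0$ and, by the vanishing statement above, $a(y) = 0$. So $a$ vanishes on a dense subset of $H$, hence on all of $H$; by the Nullstellensatz $a \in I(H) = (p)$, i.e. $p \mid a$, contradicting the coprimality of $a$ and $b$. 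Therefore $b$ is a unit and the proof concludes as above.

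The main obstacle is precisely the claim that $b$ may be taken to be a unit. Factoriality does double duty here: first to produce a coprime representative $a/b$, and second to guarantee $I(H) = (p)$ so that vanishing on $H$ yields the divisibility $p \mid a$. The codimension-two condition in the definition of almost surjectivity is calibrated exactly so that each candidate polar hypersurface $H$ still meets $\mathrm{Im}(\Phi)$ densely; weakening it to a codimension-one bound would permit $H$ to lie inside the complement of the image, so that the pole of $a/b$ along $H$ would be invisible on $X$, and both the argument and the statement itself would break down.
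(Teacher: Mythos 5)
Your proof is correct and follows essentially the same route as the paper's: write the element as a coprime fraction $\Phi^*(a)/\Phi^*(b)$ using factoriality, use almost surjectivity to show the zero locus of $b$ meets $\mathrm{Im}(\Phi)$ in a dense subset of a codimension-one component, and conclude from regularity that $a$ vanishes there too, contradicting coprimality. The only cosmetic difference is that you phrase the final contradiction as the divisibility $p \mid a$ via $I(V(p))=(p)$, whereas the paper phrases it as the dimension bound $\dim V(a)\cap V(b)\leq \dim Y-2$; these are interchangeable.
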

\begin{proof}
Clearly $A[\Phi ] \subseteq A(X) \cap A\left( \Phi \right)$. To prove the other way round, let $f \in A(X) \cap A\left( \Phi \right)$.
If $f=0$, we are done. Now assume $f$ is a non-zero element of $A(X)$ such that $f \in A\left( \Phi \right)$. Then there exist non-zero regular functions $g,h$ on $Y$ that are relatively prime in $A(Y)$ such that $$f= \dfrac{g \circ \Phi}{h \circ \Phi}.$$
Note that $\Phi$, being almost surjective, is dominant. Hence the induced map $\Phi^* \colon A(Y) \longrightarrow A(X)$ between the coordinate rings is injective, and $A(Y) \cong A[\Phi]$ via $\Phi^*$. Therefore $s \in A(Y)$ is unit in $A(Y)$ if and only if $s\circ \Phi$ is unit in $A[\Phi]$. To show $f \in A[\Phi ]$, it is enough to show that $h$ is a unit in $A(Y)$. Assume the contrary.
Let us consider the following sets
\begin{align*}
Z& =\{ y\in Y| h(y)=0 \}, \text{ and }\\
W& =\{y \in Y | g(y)=0\}.
\end{align*}
Since $h$ is non-unit, $Z \cap  \text{Im}(\Phi ) \neq \emptyset$. Otherwise, $Z \subseteq \overline{ Y \setminus  \text{Im}(\Phi )}$ and almost surjectivity of $\Phi$ contradict the fact that $\dim Z=\dim Y-1$. Moreover, dimension of $\overline{Z \cap  \text{Im}\left(\Phi  \right)}$ is $\dim Y-1$. This follows from almost surjectivity of $\Phi$, and the fact that $$Z \subseteq \overline{Z \cap  \text{Im}(\Phi )} \bigcup \overline{Z \cap \left( Y \setminus  \text{Im}(\Phi ) \right)}.$$ Since $f$ is a regular function on $X$, so $Z \cap  \text{Im}(\Phi ) \subseteq W \cap  \text{Im}(\Phi )$. Note that $g$ can't be a unit element of $A(Y)$; otherwise, $W \cap  \text{Im}(\Phi )= \emptyset$, which contradict that $Z \cap  \text{Im}(\Phi ) \neq \emptyset$. Thus we obtain $Z \cap W \cap  \text{Im}(\Phi ) = Z \cap  \text{Im}(\Phi )$, 
and hence $\dim  \overline{Z \cap W \cap  \text{Im}(\Phi )}= \dim \overline{ Z \cap  \text{Im}(\Phi )}= \dim Y-1$. But, since $g,h \in A(Y)$ are relatively prime, $\dim Z \cap W \leq \dim Y-2$, a contradiction. Hence, $h$ is a unit, and we are done.  
\end{proof}
We are now in a position to prove the main result in this context.
\begin{prp}\label{fun_comp_thm}
Let $X,Y$ be two affine varieties over $k$ such that $Y$ is factorial, and $\Phi : X \longrightarrow Y$ be an almost surjective morphism. Then $ A\langle \Phi \rangle= A[\Phi ]$.
\end{prp}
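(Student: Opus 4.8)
The plan is to prove the nontrivial inclusion $A\langle \Phi \rangle \subseteq A[\Phi]$, since $A[\Phi] \subseteq A\langle \Phi \rangle$ holds trivially. So fix $g \in A\langle \Phi \rangle$, say $h \circ \Phi = g$ for some (set-theoretic) function $h$ on $Y$. Because $g \in A(X)$ automatically, \Cref{almost_surjective_lemma} reduces the whole problem to showing that $g$ lies in the fraction field $A(\Phi)$ of $A[\Phi]$: once $g \in A(X) \cap A(\Phi)$, that lemma forces $g \in A[\Phi]$. Thus the entire task is to exhibit $g$ as a quotient of elements of $A[\Phi]$.

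To do this I would exploit the geometry of the set $D = \{(\phi_1(x),\ldots,\phi_n(x),g(x)) : x \in X\}$ of \Cref{closure_lemma} and its closure $\overline D \subseteq Y \times k$. Write $\Psi \colon X \to \overline D$ for the dominant morphism $x \mapsto (\phi_1(x),\ldots,\phi_n(x),g(x))$, and let $\pi \colon \overline D \to Y$ and $\pr \colon \overline D \to k$ be the two projections, so that $\pi \circ \Psi = \Phi$ and $\pr \circ \Psi = g$. Since $X$ is irreducible, so is $\overline D$, and \Cref{closure_lemma} gives $\dim \overline D = \dim Y$. As $\Phi$ is dominant, $\pi$ is then a dominant morphism between irreducible varieties of equal dimension, hence generically finite, and the induced extension $k(Y) \hookrightarrow k(\overline D)$ is finite.

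The crucial point, which I expect to be the main obstacle, is to show that this extension has degree one, i.e.\ that $\pi$ is birational. Here the hypothesis that $g$ is determined by $\Phi$ enters: for any $y \in \text{Im}(\Phi)$, every point of $D$ lying over $y$ has last coordinate $g(x) = h(y)$, so the fiber $\pi^{-1}(y) \cap D$ is the single point $(y,h(y))$. I would then make this generic by discarding the part $\overline D \setminus D$: writing $\overline D = D \cup W$ with $\dim W < \dim \overline D$ as in the Closure Theorem, the set $\overline{\pi(W)}$ is a proper closed subset of $Y$, and removing it together with the closed set $\overline{Y \setminus \text{Im}(\Phi)}$ leaves a dense open $U \subseteq Y$ over which $\pi$ has singleton fibers lying in $D$. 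In characteristic zero the finite extension $k(\overline D)/k(Y)$ is separable, so its degree equals the number of points in a general fiber; hence the degree is $1$ and $\pi$ is birational.

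Consequently the coordinate function $\pr \in k(\overline D)$ equals $\pi^*(r)$ for some rational function $r = p/q \in k(Y)$ with $p,q \in A(Y)$ and $q \neq 0$ (note $q \circ \Phi \neq 0$ since $\Phi^*$ is injective). Pulling back along $\Psi$ and using $\pi \circ \Psi = \Phi$ yields $g = \pr \circ \Psi = (p \circ \Phi)/(q \circ \Phi)$, exhibiting $g$ as an element of $A(\Phi)$. Combined with $g \in A(X)$, \Cref{almost_surjective_lemma} gives $g \in A[\Phi]$, completing the argument. The only delicate step is the passage from ``injective on the dense set $D$'' to ``birational,'' which is precisely where equal dimension (\Cref{closure_lemma}), characteristic zero, and the Closure-Theorem control on $\overline D \setminus D$ are all used.
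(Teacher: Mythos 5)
Your proposal is correct, and its skeleton is exactly the paper's: reduce to showing $g \in A\left( \Phi \right)$ via \Cref{almost_surjective_lemma}, then use \Cref{closure_lemma} together with the determinedness of $g$ to analyze the projection $\pi \colon \overline{D} \to Y$. The only genuine divergence is in how the key ``degree one'' step is executed. The paper takes the irreducible polynomial $q \in A(Y)[X_{n+1}]$ cutting out the hypersurface $\overline{D}$ and shows $\deg_{X_{n+1}} q = 1$ by a resultant computation: $\mathrm{Res}_{X_{n+1}}\bigl(q, (\partial/\partial X_{n+1})q\bigr) \neq 0$ by Gauss's lemma, so a fiber over a suitably generic $\textbf{y}$ (avoiding $Z(q_d)$, $Z(r)$ and $\overline{\pi(W)}$) consists of $d$ distinct points of $D$, forcing $d \leq 1$. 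You instead invoke the standard fact that, in characteristic zero, the degree of the finite extension $k(\overline{D})/k(Y)$ equals the cardinality of a general fiber, and conclude that $\pi$ is birational. These are the same idea in substance --- both exploit separability of the generic fiber, which is precisely where characteristic zero enters --- but the paper's version is more elementary and self-contained, while yours is slightly cleaner at the finish: birationality immediately writes the last coordinate function as $\pi^*(p/q)$, whereas the paper extracts $g = -(q_0 \circ \Phi)/(q_1 \circ \Phi)$ implicitly from $d=1$. Your handling of the delicate points (nonemptiness and density of $U$ after removing $\overline{\pi(W)}$ and $\overline{Y \setminus \mathrm{Im}(\Phi)}$, and the need for $q \circ \Phi \neq 0$ via injectivity of $\Phi^*$) is sound.
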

\begin{proof} Clearly $A[\Phi ] \subseteq A\langle \Phi \rangle$. The proof of the reverse inclusion is an adaption of the proof of \cite[Theorem 3.3]{Aichinger} to the more general situation. To prove it, let $g \in A\langle \Phi \rangle$. Clearly $g \in A(X)$ by definition.
 According to \Cref{almost_surjective_lemma}, it is enough to show that $g \in A\left( \Phi \right)$. Consider the set $$D := \{ \left(\phi_1(x),\ldots ,\phi_n(x),g(x)\right) \in Y \times \mathbb{A}^1_k \mid x \in X \}.$$ It follows from \Cref{closure_lemma} that $\overline{D}$ is an irreducible hypersurface in $Y \times k$. Let $\overline{D}$ is defined by a non-zero, non-unit irreducible polynomial $q \in A(Y)[X_{n+1}]$. Let $\deg_{X_{n+1}} (q)=d$. Observe that $d \neq 0$. Otherwise, $q \in A(Y)$, and thus $q \circ \Phi=0$. Following the fact that $\Phi$ is dominant, we conclude $q=0$, a contradiction. Therefore $d \geq 1$. We will now show that $d=1$. \\
\hf Denoting the function field of the variety $Y$ by $K=k(Y)$, we can assume $q$ as an element of $K[X_{n+1}]$. Let $r:= \text{Res}_{X_{n+1}}\left( q,\left( \partial/ \partial X_{n+1}\right)q \right)$, be the resultant of $q$ and $\left( \partial/ \partial X_{n+1}\right)q$, the derivative of $q$ with respect to $X_{n+1}$. Following the notations of the proof of \Cref{closure_lemma}, we obtain there exists an algebraic subset $W$ of $Y \times \mathbb{A}^1_k$ such that $\overline{D}= D \cup W$ and $\dim W < \dim \overline{D}=n_1$. \\
\hf First assume $r=0$, then both $q$ and $\left( \partial/ \partial X_{n+1}\right)q$ have a non-constant common factor in $K[X_{n+1}]$\footnote{For $f,g \in K[X]$, if $r= \text{Res}_X (f,g)$, then $r=0$ if and only if both $f$ and $g$ have a non-constant common factor in $K[X]$ (cf.~\cite{Cox_OShea}).}. Since $q$ is irreducible in $ A(Y)[X_{n+1}]$, by the Gauss Lemma, $q$ is also irreducible in $ K[X_{n+1}]$ which contradict the fact that $q$ has a factor in $K[X_{n+1}]$. \\
\hf Therefore $r \neq 0$. If $q_d\in A(Y)$ denotes the leading coefficient of $q$, there exists an element $\textbf{y} \in Y$ such that $q_d(\textbf{y} ) \neq 0, r(\textbf{y} ) \neq 0$ and $\textbf{y}  \notin \pi(W)$. Such an $\textbf{y}$ exists because $Z(q_d) \cup Z(r)\cup \overline{\pi(W)} \neq Y$, where $Z(h)$ denotes the zero set of a function $h$.  If we denote the polynomial $q(\textbf{y} ,z)$ in $k[z]$ by $\tilde{q}(z)$, then $r(\textbf{y})= \text{Res}_z(\tilde{q}(z),\tilde{q}^{\prime}(z)) $. Since $r(\textbf{y} ) \neq 0$, the polynomial $\tilde{q}(z)$ has $d$ distinct roots in $k$. If $b \in k$ such that $q(\textbf{y} ,b)=0$, then $(\textbf{y} ,b) \in \overline{D}$, but not in $W$, that is, $(\textbf{y} ,b)\in D$. We deduce $(\textbf{y} ,b)\in D$ if and only if $q(\textbf{y} ,b)=0$. Since $g$ is determined by $\Phi$, there exists at most one $b \in k$ such that $(\textbf{y} ,b)\in D$. Thus we obtain $d \leq 1$. Therefore $d=1$, and hence $g \in A(\Phi )$.
\end{proof}
\begin{rem}
In the proof, we have used the fact that \textit{char} $k=0$. This is because the degree of $\left( \partial/ \partial X_{n+1}\right)q$ with respect to the variable $X_{n+1}$ is $d-1$, and $\left( \partial/ \partial X_{n+1}\right)q \neq 0$. But if \textit{char} $k=p>0$, then it could happen that $\left( \partial/ \partial X_{n+1}\right)q = 0$, or $\deg_{X_{n+1}} \left( \partial/ \partial X_{n+1}\right)q = 0$, even when $d>1$. One need to take care of the above possibilities when the base field is of positive characteristic. In fact, if $\left( \partial/ \partial X_{n+1}\right)q = 0$, then $q \in A(Y)[X_{n+1}^{p^n}]$ for some $n \in \mathbb{N}$, and it can be shown that $g^{p^n} \in A[\Phi]$. So the equality of $A \langle \Phi \rangle$ and $A[\Phi ]$ can't be expected if  \textit{char} $k>0$. An example is given below in this context. Since we are interested about the equality of the two sub-algebras, the positive characteristic case is not included here.
\end{rem}
The following example, discussed in \cite[p. 308]{Aichinger}, illustrates the difficulty in positive characteristics.
\begin{eg}
Let $k$ be the algebraic closure of a simple field $\mathbb{F}_p$ of characteristic $p$. Assume $X=Y=\mathbb{A}^1_k$, and $\Phi: X \longrightarrow Y$, given by $$\Phi(x)=x^p.$$ Clearly $\Phi$ is bijective, but not an automorphism. Thus $A[\Phi ] \neq A(X)$. Observe that $A \langle \Phi \rangle = A(X)$. 
\end{eg}
The interpolation problem is now an immediate consequence of \Cref{fun_comp_thm}.
\begin{thm}\label{fun_comp_corollary}
Let $X,Y$ be two affine varieties over $k$ such that $Y$ is factorial, and $\Phi : X \longrightarrow Y$ be an almost surjective morphism. Furthermore, assume $h : Y \rightarrow k $ is a function such that $h \circ \Phi$ is a regular function on $X$. Then there exists a regular function $p:Y \rightarrow k$ such that $p(y)=h(y)$ for all $y \in \Phi (X)$.
\end{thm}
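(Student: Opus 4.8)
The plan is to derive this statement as an immediate corollary of \Cref{fun_comp_thm}, since the substantive content of the interpolation problem has already been recast there as the equality of the two subalgebras $A[\Phi]$ and $A\langle \Phi \rangle$. The entire argument then amounts to unwinding the definitions of these subalgebras and matching them against the hypotheses placed on $h$, so I expect the proof to be short.

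First I would set $g := h \circ \Phi$. By hypothesis $g$ is a regular function on $X$, so $g \in A(X)$. Moreover, $h$ is precisely a (not necessarily regular) function on $Y$ satisfying $h \circ \Phi = g$, which is exactly the condition that $g$ be \emph{determined by} $\Phi$; hence $g \in A\langle \Phi \rangle$. At this point the almost surjectivity of $\Phi$ together with the factoriality of $Y$ lets me invoke \Cref{fun_comp_thm} to conclude that $A\langle \Phi \rangle = A[\Phi]$, and therefore $g \in A[\Phi]$.

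Next I would unpack the membership $g \in A[\Phi]$: by the definition of $A[\Phi]$ as the image of $\Phi^* \colon A(Y) \longrightarrow A(X)$, there exists a regular function $p \in A(Y)$ with $\Phi^*(p) = g$, that is, $p \circ \Phi = g = h \circ \Phi$. Evaluating both sides at an arbitrary $x \in X$ gives $p(\Phi(x)) = h(\Phi(x))$, and as $x$ ranges over $X$ its image $\Phi(x)$ ranges over all of $\text{Im}(\Phi) = \Phi(X)$; thus $p(y) = h(y)$ for every $y \in \Phi(X)$, which is the desired conclusion.

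There is no substantial obstacle remaining at this stage: all of the genuine work — controlling the behaviour of $g$ over the complement of the image via the Closure Theorem and the resultant computation of \Cref{fun_comp_thm}, together with the reduction of $A(X) \cap A(\Phi)$ to $A[\Phi]$ — has been carried out beforehand. The only point that warrants a moment's care is conceptual rather than technical: the relation $p \circ \Phi = h \circ \Phi$ pins down $p$ solely on $\text{Im}(\Phi)$ and constrains it in no way off the image, which is consistent with the fact (illustrated in \Cref{counter-example}) that no interpolation over all of $Y$ can be expected in general. Accordingly the statement is phrased as agreement on $\Phi(X)$ alone, and that is exactly what the identity of the two subalgebras delivers.
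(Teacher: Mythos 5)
Your proposal is correct and follows exactly the paper's own route: the paper likewise sets $h \circ \Phi \in A\langle \Phi \rangle$ by definition, invokes \Cref{fun_comp_thm} to get $h \circ \Phi = p \circ \Phi$ for some $p \in A(Y)$, and reads off the agreement on $\Phi(X)$. No gaps; your version just spells out the definitional unwinding a bit more explicitly.
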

\begin{proof}
Note that by definition $h \circ \Phi \in A \langle \Phi \rangle$. According to \Cref{fun_comp_thm}, $h \circ \Phi \in A[\Phi]$, that is, $h \circ \Phi= p \circ \Phi$ for some $p \in A(Y)$. The fact $p(y)=h(y)$ for all $y \in \text{Im}\left( \Phi \right)$ is clear from the discussion.
\end{proof}
\begin{rem}
\Cref{fun_comp_corollary} can be though of as a generalization of the following fact from one variable complex calculus: \\
Let $g$ be a non-constant polynomial over $\mathbb{C}$, and $f: \mathbb{C} \longrightarrow \mathbb{C}$ is a function such that $f\circ g$ is a polynomial. Then $f$ is a polynomial.
\end{rem} 
We now consider an example which is discussed in \cite[p. 304]{Aichinger}. It asserts that almost surjectivity of $\Phi$ is necessary in \Cref{fun_comp_thm}.
In fact, shortly we show that with some other mild hypothesis, almost surjectivity is necessary and sufficient for an affirmative answer to the interpolation problem (\Cref{equivalent_interpolation_problem}). 
\begin{eg}
Let $X=Y= \mathbb{A}^2_{k}$ and $\Phi: X \longrightarrow Y$, be defined by $\Phi(x,y)= (x,xy).$ Also consider the function $h: Y \rightarrow k$, given by
\begin{align*}
h(x,y)= \begin{cases}
y^2/x & \text{ if } x\neq 0,\\
0 & \text{otherwise.}
\end{cases}
\end{align*}
Clearly $h \circ \Phi \in A \langle \Phi \rangle$. Also note that $h \circ \Phi$ can be written as $(y^2 \circ \Phi )/ (x \circ \Phi)$, that is, $h \circ \Phi \in A(\Phi)$ as well. We now show that $h \circ \Phi$ does not belong to $A[ \Phi ]$. Note that $\Phi$ is dominant, that is, $A(Y) \cong A[\Phi]$ via $\Phi^*$. Now, $h \circ \Phi \in A[\Phi]$ if and only if $x \circ \Phi$ is unit in $A[\Phi]$. It is equivalent to saying that the regular function $x$ on $Y$ is invertible, which is not true. Thus $h \circ \Phi \notin A[\Phi]$. Observe that $\Phi$ is not almost surjective; all the other hypotheses of \Cref{fun_comp_thm} are satisfied.
\end{eg}
\subsection{Criterion for almost surjectivity}
It is seen that almost surjectivity of a regular map plays a crucial role in \Cref{fun_comp_thm} and \Cref{fun_comp_corollary}. We now discuss various equivalent criteria of almost surjectivity of a morphism which will be useful for our purpose. We start by proving an immediate consequence about constructible subset of an affine variety. This is an immediate extension of \cite[Proposition 2.3]{Aichinger}. The proof is included for the sake of completion.

\begin{lmm}\label{constructible_lemma}
Let $X$ be an affine variety of dimension $n$ over $k$, and $S$ be a constructible subset of $X$ such that $\dim \overline{S} \geq n-1$. Then there exist algebraic sets $Y,Z$ such that $Y $ is irreducible, $\dim Y= n-1$, $\dim Z \leq n-2$, and $Y \setminus Z \subseteq S$.
\end{lmm}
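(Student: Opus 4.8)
The plan is to use the theory of constructible sets, exploiting that $S$ is a finite union of locally closed sets, together with the structure of irreducible components to isolate the right codimension-one piece. The key fact I would invoke is that a constructible set $S$ can be written as a finite union $S = \bigcup_i (U_i \cap C_i)$, where each $C_i$ is an irreducible closed subset of $X$ and $U_i$ is a nonempty open subset of $C_i$; equivalently, each constructible set contains a dense open subset of its closure. Since $\dim \overline{S} \geq n-1$ and $\overline{S}$ is a union of the closures of finitely many such pieces, at least one irreducible component $\overline{S_0}$ of $\overline{S}$ must have dimension exactly $n-1$ (it cannot exceed $n-1$ unless it equals $X$, in which case we take a codimension-one subvariety; more precisely, among the finitely many irreducible components of $\overline{S}$, pick one of dimension $\geq n-1$, and if its dimension is $n$ it is all of $X$, so I instead localize to produce a genuine codimension-one piece).

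First I would apply the structure theorem for constructible sets to write $S = \bigcup_{i=1}^{N} L_i$ with each $L_i$ locally closed, say $L_i = V_i \cap W_i$ with $V_i$ open and $W_i$ closed irreducible in $X$, and $\overline{L_i} = W_i$. Then $\overline{S} = \bigcup_i W_i$, so from $\dim \overline{S} \geq n-1$ there exists an index $i_0$ with $\dim W_{i_0} \geq n-1$. I would then set $Y$ to be an irreducible closed subset of dimension exactly $n-1$: if $\dim W_{i_0} = n-1$ take $Y := W_{i_0}$; if $\dim W_{i_0} = n$ then $W_{i_0} = X$ and $L_{i_0}$ contains a nonempty open subset of $X$, whence $S$ itself contains a nonempty open $U \subseteq X$, and I take $Y$ to be any $(n-1)$-dimensional irreducible closed subvariety meeting $U$ (such exists by cutting $X$ with a hypersurface, using that $X$ is a positive-dimensional affine variety).

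Next I would produce the sets $Y \setminus Z \subseteq S$ and $Z$ of dimension $\leq n-2$. In the first case, on $Y = W_{i_0}$ the set $L_{i_0} = V_{i_0} \cap Y$ is a nonempty open subset of the irreducible variety $Y$, so its complement $Z_0 := Y \setminus L_{i_0}$ is closed of dimension $\leq n-2$; since $L_{i_0} \subseteq S$ we get $Y \setminus Z_0 \subseteq S$. In the second case the containment $U \cap Y \subseteq S$ similarly gives $Y \setminus (Y \setminus U) \subseteq S$ with $Y \setminus U$ closed of dimension $\leq n-2$. Setting $Z := Z_0$ (respectively $Z := Y \setminus U$) completes the construction. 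I expect the main obstacle to be the bookkeeping in the case $\dim W_{i_0} = n$: one must verify that the chosen hypersurface section $Y$ genuinely meets the open set $U$ and that the leftover piece $Z = Y \setminus U$ has dimension at most $n-2$, which follows because $Y \cap U$ is a nonempty (hence dense) open subset of the irreducible $(n-1)$-dimensional variety $Y$.
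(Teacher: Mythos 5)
Your proof is correct and follows essentially the same route as the paper: decompose the constructible set into finitely many locally closed pieces with irreducible closures, select a piece whose closure has maximal dimension (necessarily $\geq n-1$), and split into the cases where that dimension is $n-1$ (take the closure itself as $Y$ and the boundary as $Z$) or $n$ (cut with an irreducible hypersurface section meeting the contained open set). The only cosmetic difference is your choice of $Z$ as the full complement $Y\setminus L_{i_0}$ rather than the closed set originally subtracted off, which changes nothing.
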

\begin{proof}
Since $S$ is constructible, by definition, there exist algebraic subvarieties $Y_1,\ldots ,Y_p$ of $X$, and closed algebraic subsets $Z_1, \ldots ,Z_p$ of $X$ with $Z_i \subsetneq Y_i$ for all $i$, and $S= \bigcup \limits_{i=1}^p \left( Y_i \setminus Z_i\right)$. Without loss of generality let us assume $\dim Y_1$ is the maximum among all $\dim Y_i$ for all $i$. Since $\overline{S} \subseteq \bigcup \limits_{i=1}^p Y_i$ and $\dim \overline{S} \geq n-1$, we conclude that $\dim Y_1 \geq n-1$.\\
\hf If $\dim Y_1=n-1$, then $Y= Y_1$, and $Z=Z_1$ are the required sets.\\
\hf If $\dim Y_1=n$, then $Y_1=X$, and $X \setminus Z_1 \subseteq S$. Let $W$ be a codimension $1$ subvariety of $X$ such that $W \nsubseteq Z_1$. Then clearly $W \cap \left( X \setminus Z_1\right)= W \setminus \left( W \cap Z_1 \right)$, and hence $W \cap \left( X \setminus Z_1\right)  \subseteq S$. Moreover, $W \cap Z_1 \neq W$, thus $Y=W$ and $Z= W \cap Z_1$ are the required sets.
\end{proof}
The following result gives equivalent criteria of almost sujectivity of regular maps from any affine variety to an affine factorial variety. It is a generalization of Lemma 2.4 and Theorem 3.3 from \cite{Aichinger}.
\begin{prp}\label{equivalent_almost_surjective_lemma}
Let $X,Y$ be two algebraic varieties over $k$ such that $Y$ is factorial, and $\Phi:X \longrightarrow Y$ be a regular morphism. Then the following statements are equivalent:
\begin{enumerate}[label=(\roman*)]
\item \label{i} $\Phi$ is almost surjective.
\item \label{ii} $\Phi$ is dominant, and $A(X) \cap A\left( \Phi \right)=A[\Phi ]$.
\item \label{iii} For $f,g \in A(Y)$ with $f \circ \Phi \mid g \circ \Phi$ in $A(X)$, we have $f \mid g$ in $A(Y)$.
\item \label{iv} $\Phi$ is dominant, and $A \langle \Phi \rangle=A[\Phi ]$.
\end{enumerate}
\end{prp} 
\begin{proof}
\ref{i} $\Rightarrow$ \ref{ii} The morphism $\Phi$, being almost surjective, is dominant. Other conclusion follows from \Cref{almost_surjective_lemma}.\\
\ref{ii} $\Rightarrow$ \ref{iii} Since, $\Phi$ is dominant, $\Phi^*$ is injective. By the hypothesis, there exists $h \in A(X) $ such that $g \circ \Phi= h \cdot \left( f \circ \Phi \right)$. If $f \circ \Phi=0$, then $g \circ \Phi=0$. It follows from injectivity of $\Phi^* $ that both $f,g$ are zero in $A(Y)$, and in this case $f \mid g$. Now if $f \circ \Phi \neq 0$, then $h \in A(X) \cap A\left( \Phi \right)$, that is, $h \in A[\Phi]$ by the hypothesis. Thus there exists $p \in A(Y)$ such that $p \circ \Phi =h$. Hence we get $\left( g- p f\right) \circ \Phi =0$. Injectivity of $\Phi^* $ yields $g- p f=0$. Thus $f \mid g$ in $A(Y)$.\\
\ref{iii} $\Rightarrow$ \ref{i} We prove this by the method of contradiction. Assume $\Phi$ is not almost surjective. Then $S := Y \setminus \text{Im}(\Phi )$ is a constructible subset of $Y$ such that $\dim \overline{S} \geq \dim Y -1$. Then by \Cref{constructible_lemma}, there exist algebraic subsets $Z,W$ of $Y$ such that $Z $ is irreducible, $\dim Z= \dim Y -1$, $\dim W \leq \dim Y -2$, and $Z \setminus W \subseteq S$. Since $A(Y)$ is a UFD, there exists an irreducible regular function $p$ on $Y$ such that $Z=V(p)$. Observe that due to dimensional reason $Z \nsubseteq W$, that is, $I(W) \nsubseteq I(Z)$. Thus there exists $q \in A(Y)$ such that $q \in I(W)$ with $q \notin I(Z)$. Note that $Z \subseteq W \cup S$, and hence $Z \cap \text{Im}(\Phi ) \subseteq W $. Therefore if for $x \in X$, $p\left( \Phi (x) \right)=0$, then $q\left( \Phi (x) \right)=0$. By Hilbert Nullstellensatz, we deduce that $q \circ \Phi \in \sqrt{p \circ \Phi}$. There exists $n\in \mathbb{N}$ and $r \in A(X)$ such that $q \circ \Phi = r \cdot \left( p \circ \Phi \right)^n$, and hence $p\circ \Phi \mid q \circ \Phi$. Using \ref{iii}, we conclude that $p \mid q$; thus $q \in I(Z)$, a contradiction. \\
Thus we have proved \ref{i} $\Leftrightarrow$ \ref{ii} $\Leftrightarrow$ \ref{iii}. Now we show that \ref{i} $\Leftrightarrow$ \ref{iv}. Again, \ref{i} $\Rightarrow$ \ref{iv} follows from \Cref{fun_comp_thm}. \\ 
\hf To show \ref{iv} $\Rightarrow$ \ref{i}, assume the contrary, i.e., $\Phi$ is not almost surjective. Then by \Cref{equivalent_almost_surjective_lemma}, there exist regular functions $f$ and $g$ on $Y$ such that $f \circ \Phi \mid g \circ \Phi$, but $f \nmid g$. Moreover, we can assume $\gcd (f,g)=1$. There exists a regular function $h$ on $X$ such that $g \circ \Phi = h \cdot \left( f \circ \Phi \right)$. Consider the regular function $\tilde{g}:= h \cdot \left( g \circ \Phi \right)$ on $X$. We show that $\tilde{g} \in A \langle \Phi \rangle$, but it does not belong to $A [ \Phi ]$.\\
\hf To show $\tilde{g} \in A \langle \Phi \rangle$, we assume $a,b \in X$ such that $\Phi (a)= \Phi (b)$. If $\left( g \circ \Phi \right) (a)=0$, then $\tilde{g}(a)=\tilde{g}(b)$. If $\left( g \circ \Phi \right) (a) \neq 0$, then $$\tilde{g}(a)= \dfrac{ \left( g \circ \Phi \right)^2(a) }{ \left( f \circ \Phi \right)(a)}=\frac{\left( g \circ \Phi \right)^2(b)}{ \left( f \circ \Phi \right)(b)}= \tilde{g}(b).$$ 
Thus we get for $a,b \in X$, if $\Phi (a)= \Phi (b)$, $\tilde{g}(a)=\tilde{g}(b)$. It is now easy to construct a function $p$ on $Y$ such that $p \circ \Phi= \tilde{g}$. Therefore, $\tilde{g} \in A \langle \Phi \rangle$. \\
\hf Now our goal is to show $\tilde{g} \notin A[ \Phi ]$. 
Assume $\tilde{g} \in A[\Phi ]$, that is, there exists a regular function $\tilde{h}$ such that $\tilde{g}= \tilde{h} \circ \Phi$. Thus we derive $(  \tilde{h} \circ \Phi )\left(  f \circ \Phi \right)=\left(  g \circ \Phi \right)^2 $. Since $\Phi$ is assumed to be dominant, $\tilde{h} f= g^2$, and hence $f \mid g^2$ which contradict that $\gcd (f,g)=1$. Thus $\tilde{g} \notin A[\Phi ]$, a contradiction. Hence $\Phi$ is almost surjective.
\end{proof}
The following theorem is an immediate consequence of the above result:
\begin{thm}\label{equivalent_interpolation_problem}
Let $X,Y$ be two algebraic varieties over $k$ such that $Y$ is factorial, and $\Phi:X \longrightarrow Y$ be a dominant regular morphism. The interpolation problem has an affirmative answer if and only if $\Phi$ is almost surjective.
\end{thm}
\section{Applications of the interpolation problem}
In this section, some important applications of \Cref{fun_comp_thm} and \Cref{fun_comp_corollary} are discussed. Throughout this section, we assume $\Phi:X \longrightarrow Y $ is a regular morphism and $Y$ is factorial.
\subsection{A generalization of a theorem of Ax}
As mentioned in \Cref{intro}, we consider a generalization of the theorem of Ax in a different direction. Our object of study here is injective regular map of affine varieties such that the target variety is factorial. We always assume that $Y$ is factorial and $\Phi:X \longrightarrow Y$ is an injective regular map, unless specified otherwise. The following theorem is another important consequence of \Cref{fun_comp_thm}. 
\begin{thm}\label{set_th_criteria}
Let $X,Y$ be affine varieties over $k$ such that $Y$ is factorial, and $\Phi:X \longrightarrow Y$ be a regular morphism. Then $\Phi$ is biregular if and only if it is injective and almost surjective.
\end{thm}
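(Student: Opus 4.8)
The plan is to prove both directions, with the forward (biregular $\Rightarrow$ injective and almost surjective) being straightforward and the reverse (injective and almost surjective $\Rightarrow$ biregular) being where the real work and the main obstacle lie.

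For the forward direction, suppose $\Phi$ is biregular. Then $\Phi$ is an isomorphism, hence bijective, so in particular injective and indeed surjective; surjectivity immediately gives $\overline{Y \setminus \text{Im}(\Phi)} = \emptyset$, whose dimension is $-\infty \leq \dim Y - 2$, so $\Phi$ is almost surjective. This half is essentially formal.

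For the reverse direction, assume $\Phi$ is injective and almost surjective. The standard characterization tells us that $\Phi$ is biregular if and only if the induced map $\Phi^* : A(Y) \to A(X)$ is a $k$-algebra isomorphism. Since $\Phi$ is almost surjective it is dominant, so $\Phi^*$ is injective; the crux is therefore \textbf{surjectivity of $\Phi^*$}, i.e.\ showing $A[\Phi] = A(X)$. The plan is to use \Cref{fun_comp_thm}, which says $A[\Phi] = A\langle \Phi \rangle$ under exactly our hypotheses. Thus it suffices to prove $A\langle \Phi \rangle = A(X)$, that is, every regular function $g$ on $X$ is determined by $\Phi$ in the sense that there is some set-theoretic function $h : Y \to k$ with $h \circ \Phi = g$. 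Here is where injectivity enters decisively: because $\Phi$ is injective, each point $y \in \text{Im}(\Phi)$ has a unique preimage $x_y \in X$, so we may unambiguously define $h$ on $\text{Im}(\Phi)$ by $h(y) := g(x_y)$, and extend $h$ arbitrarily (say by $0$) on $Y \setminus \text{Im}(\Phi)$. By construction $h \circ \Phi = g$, so $g \in A\langle \Phi \rangle$. This shows $A(X) \subseteq A\langle \Phi \rangle$, and since the reverse inclusion is trivial we get $A\langle \Phi \rangle = A(X)$, whence $A[\Phi] = A(X)$ and $\Phi^*$ is surjective.

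With $\Phi^*$ established as an isomorphism, I would invoke the classical fact that a morphism of affine varieties is biregular precisely when the induced coordinate-ring map is an isomorphism, completing the proof. \textbf{The main obstacle} I anticipate is being careful about what ``determined by $\Phi$'' requires: the function $h$ need not be regular, which is exactly why injectivity (giving a well-defined value on the image) combined with \Cref{fun_comp_thm} (upgrading membership in $A\langle \Phi \rangle$ to membership in $A[\Phi]$) does the heavy lifting. One subtlety worth checking is that injectivity plus dominance genuinely forces the preimage assignment to be single-valued on all of $\text{Im}(\Phi)$, and that the freedom to define $h$ off the image causes no conflict — but this is clean precisely because the defining condition on $A\langle \Phi \rangle$ only constrains $h$ on $\text{Im}(\Phi)$.
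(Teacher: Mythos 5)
Your proposal is correct and follows essentially the same route as the paper: the forward direction is formal, and for the converse you use injectivity to build a set-theoretic $h:Y\to k$ with $h\circ\Phi=g$ (the paper phrases this as surjectivity of the induced map $\mathcal{F}(Y,k)\to\mathcal{F}(X,k)$ on rings of all $k$-valued functions), conclude $g\in A\langle\Phi\rangle$, and then apply \Cref{fun_comp_thm} together with injectivity of $\Phi^*$ from dominance to get that $\Phi^*$ is an isomorphism. Your explicit construction of $h$ and the remark that its values off $\mathrm{Im}(\Phi)$ are unconstrained is exactly the content of the paper's argument.
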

\begin{proof}
For a set $X$, let us denote the ring of $k$-valued function on $X$ by $\mathcal{F}(X, k)$. Since $\Phi$ is injective, the induced ring map $$\widetilde{\Phi}: \mathcal{F}(Y, k) \longrightarrow \mathcal{F}(X, k),$$ given by the composition with $\Phi$, is surjective. We need to show that $\Phi^*: A(Y) \longrightarrow A(X)$ is an isomorphism. Clearly $\Phi$ is dominant, and hence $\Phi^*$ is injective. To show surjectivity of $\Phi^*$, let $f \in A(X)$. Considering $f$ as an element of  $\mathcal{F}(X, k)$, there exists $g \in \mathcal{F}(Y, k)$ such that $\widetilde{\Phi} (g)=f$, that is, $g \circ \Phi = f$. Thus $f \in A\langle \Phi \rangle$, and by \Cref{fun_comp_thm}, $f \in A[\Phi]$. Therefore, $\Phi^*$ is surjective, and hence $\Phi$ is biregular. The other way is immediate.
\end{proof}
The following results are immediate consequences of the above result.
\begin{cor}\label{main_corollary}
Let $X,Y$ be affine varieties over $k$ such that $Y$ is factorial, and $\Phi:X \longrightarrow Y$ be a regular morphism. If $\Phi$ is bijective, then it is biregular.
\end{cor}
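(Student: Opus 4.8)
The plan is to obtain this as an immediate application of \Cref{set_th_criteria}. First note that the hypotheses coincide: to say that $A(Y)$ is factorial is precisely to say that the affine variety $Y$ is factorial, so \Cref{set_th_criteria} applies verbatim. By that theorem, $\Phi$ is biregular as soon as it is both injective and almost surjective, so it suffices to verify these two properties for a bijective $\Phi$.

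Injectivity is immediate, since a bijection is injective by definition. For almost surjectivity, I would argue that a bijective morphism is in particular surjective, whence $\text{Im}(\Phi)=Y$ and consequently $Y \setminus \text{Im}(\Phi)=\emptyset$. The Zariski closure $\overline{Y \setminus \text{Im}(\Phi)}$ is then empty as well, and under the standard convention that the empty set has dimension $-\infty$ (or $-1$), the defining inequality $\dim \overline{Y \setminus \text{Im}(\Phi)} \leq \dim Y -2$ is satisfied automatically. Hence $\Phi$ is almost surjective.

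Having checked injectivity and almost surjectivity, \Cref{set_th_criteria} at once yields that $\Phi$ is biregular, which completes the argument. I anticipate no genuine obstacle here: the only point worth a remark is that surjectivity is a strictly stronger condition than almost surjectivity, so that every bijective (indeed every surjective) morphism automatically qualifies as almost surjective, and the conclusion then follows from the preceding theorem with no further work.
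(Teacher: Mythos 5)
Your proposal is correct and matches the paper's intent exactly: the paper presents this corollary as an immediate consequence of \Cref{set_th_criteria}, with bijectivity giving injectivity trivially and surjectivity giving almost surjectivity since the complement of the image is empty. Your write-up simply makes explicit the routine verification the paper leaves to the reader.
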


\subsection{Analytic criterion of biregularity}
We now move towards another important application of the interpolation problem \textemdash \, analytic criterion of biregularity of a regular morphism of complex affine varieties. The base field is assumed to be $\mathbb{C}$ in this case. For more details about the analytic properties of complex algebraic varieties, morphisms, and their connections with the algebraic properties, we refer the reader to \cite{GAGA}. A regular morphism $\Phi:X \longrightarrow Y$ induces a holomorphic map $\Phi^{\text{an}}: X^{\text{an}}  \longrightarrow Y^{\text{an}}$. It is well known that $\Phi$ is biregular if and only if $\Phi^{\text{an}}$ is biholomorphic (cf. \cite[Proposition 9]{GAGA}). Therefore the induced $\C$-algebra homomorphism  $$\left(\Phi^{\text{an}} \right)^* : \Gamma (Y^{\text{an}}, \cH_Y) \longrightarrow \Gamma (X^{\text{an}},\cH_X)$$ is an isomorphism, in particular, if $\Phi$ is biregular. However, the converse is not true in general, that is, isomorphism of $\left(\Phi^{\text{an}} \right)^*$ does not always yield biregularity of $\Phi$. However, under some suitable hypothesis, the converse is also true.  
\begin{thm}\label{analytic_criterion_theorem}
Let $X,Y$ be two complex affine varieties such that $Y$ is factorial. Assume $\Phi : X \rightarrow Y$ is a regular morphism. Then the following statements are equivalent:
\begin{enumerate}[label=(\roman*)]
\item \label{1_isom} The morphism $\Phi$ is biregular.
\item \label{2_isom} The induced $\C$-algebra morphism $$\left(\Phi^{\text{an}} \right)^* : \Gamma (Y^{\text{an}}, \cH_Y) \longrightarrow \Gamma (X^{\text{an}},\cH_X)$$ is an isomorphism. 
\end{enumerate}
\end{thm}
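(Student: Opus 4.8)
The plan is to prove the nontrivial implication \ref{2_isom} $\Rightarrow$ \ref{1_isom}; the reverse is already recorded in the discussion preceding the theorem, as a biregular $\Phi$ induces a biholomorphism $\Phi^{\text{an}}$ and hence an isomorphism on global holomorphic functions. So assume $(\Phi^{\text{an}})^*$ is an isomorphism. My strategy is to verify the two hypotheses of \Cref{set_th_criteria}, namely that $\Phi$ is injective and almost surjective, and then invoke that theorem (recalling $Y$ is factorial) to conclude that $\Phi$ is biregular.

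First I would extract \emph{dominance} from injectivity of $(\Phi^{\text{an}})^*$: if $\Phi$ were not dominant, its image would lie in a proper closed subvariety $V(q)$ with $0 \neq q \in A(Y)$, whence $q \circ \Phi = 0$ while $q^{\text{an}} \neq 0$, contradicting injectivity. Next, \emph{injectivity of $\Phi$} follows from surjectivity of $(\Phi^{\text{an}})^*$: writing $X \subseteq \mathbb{A}^m$, the coordinate functions $x_1,\ldots,x_m$ are regular, hence lie in $\Gamma(X^{\text{an}},\cH_X)$, and separate the points of $X$; if $\Phi(x_1)=\Phi(x_2)$, then choosing $g_i \in \Gamma(Y^{\text{an}},\cH_Y)$ with $x_i|_X = g_i \circ \Phi$ forces $x_i(x_1)=x_i(x_2)$ for every $i$, so $x_1=x_2$. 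Since $\Phi$ is injective and dominant between irreducible varieties, $\dim X = \dim Y =: n$.

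The main obstacle is \emph{almost surjectivity}, and this is where factoriality of $Y$ and the analytic hypothesis do the essential work. Arguing by contradiction, suppose $S := Y \setminus \text{Im}(\Phi)$ satisfies $\dim \overline{S} \geq n-1$. As $\text{Im}(\Phi)$ is constructible (Chevalley), so is $S$, and \Cref{constructible_lemma} yields an irreducible $Z$ of dimension $n-1$ and a set $W$ of dimension $\leq n-2$ with $Z \setminus W \subseteq S$, that is, $\text{Im}(\Phi) \cap Z \subseteq W$; by factoriality I write $Z = V(p)$ for an irreducible $p \in A(Y)$. I would then show $p \circ \Phi$ never vanishes on $X$: its zero set is $\Phi^{-1}(Z)$, which $\Phi$ maps injectively into $\text{Im}(\Phi)\cap Z \subseteq Z \cap W$, so $\dim \Phi^{-1}(Z) \leq n-2$; but $p \circ \Phi \neq 0$ by dominance, and a nonzero regular function on the irreducible $n$-fold $X$ has zero set either empty or of pure codimension one, forcing $\Phi^{-1}(Z) = \emptyset$.

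Consequently $1/(p \circ \Phi)$ is a \emph{global} holomorphic function on $X^{\text{an}}$, the crucial point where the analytic category supplies what the algebraic one cannot. By surjectivity of $(\Phi^{\text{an}})^*$ there is $g \in \Gamma(Y^{\text{an}},\cH_Y)$ with $g \circ \Phi = 1/(p \circ \Phi)$, so $g \cdot p \equiv 1$ on $\text{Im}(\Phi)$. Since $\Phi$ is dominant, $\text{Im}(\Phi)$ is dense in $Y^{\text{an}}$, and continuity of the holomorphic function $g \cdot p$ upgrades this to $g \cdot p \equiv 1$ on all of $Y^{\text{an}}$; hence $p$ is nowhere zero on $Y$, contradicting $Z = V(p) \neq \emptyset$ (nonempty because $\dim Z = n-1 \geq 0$). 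Therefore $\Phi$ is almost surjective, and \Cref{set_th_criteria} completes the proof. I expect the delicate step to be the dimension count establishing the non-vanishing of $p \circ \Phi$, since it is exactly there that injectivity (hence equidimensionality) and the codimension-two estimate from \Cref{constructible_lemma} must be combined correctly.
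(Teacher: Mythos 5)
Your proof is correct, but it takes a genuinely different route from the paper's. The paper never establishes set-theoretic injectivity of $\Phi$; instead it verifies criterion \ref{iii} of \Cref{equivalent_almost_surjective_lemma} (divisibility descends along $\Phi^*$) by a local argument: from $g=pf$ with $p$ holomorphic it gets $f\mid g$ in each $\cH_{Y,y}$, passes to the completions $\widehat{\cH}_{Y,y}\cong\widehat{\cO}_{Y,y}$, descends to $\cO_{Y,y}$ via Mumford's Lemma 1.29, and concludes $g/f\in A(Y)$; surjectivity of $\Phi^*$ is then obtained directly from \Cref{fun_comp_thm}. You instead extract injectivity of $\Phi$ from surjectivity of $(\Phi^{\text{an}})^*$ (separation of points by coordinates), prove almost surjectivity by contradiction through \Cref{constructible_lemma}, Krull's principal ideal theorem, and a global analytic unit argument (the zero set of $p\circ\Phi$ is forced to be empty, $1/(p\circ\Phi)$ descends to a holomorphic $g$ on $Y^{\text{an}}$, and Euclidean density of $\mathrm{Im}(\Phi)$ gives $gp\equiv 1$, contradicting $V(p)\neq\emptyset$), and then invoke \Cref{set_th_criteria}. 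Your route avoids the formal-completion machinery and the GAGA comparison of local rings entirely, at the cost of needing injectivity of $\Phi$ (which the hypothesis happily supplies) and of routing through \Cref{set_th_criteria} rather than concluding surjectivity of $\Phi^*$ directly; the paper's local divisibility argument is what lets it bypass injectivity altogether. All the facts you use (Chevalley constructibility, equidimensionality of hypersurfaces in an affine domain, Euclidean density of nonempty Zariski-open sets, and the absence of circularity in citing \Cref{set_th_criteria}, which depends only on \Cref{fun_comp_thm}) check out.
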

\begin{proof}
\ref{1_isom} $\Rightarrow$ \ref{2_isom} is easy. To show \ref{2_isom} $\Rightarrow$ \ref{1_isom}, consider the following commutative diagram of rings.
\begin{equation}\label{mixed_sheaf_diag_another}
\begin{gathered}
\xymatrix{
A(Y) \ar[rr]^-{\Phi^*} \ar[d] && A(X) \ar[d]\\
\Gamma(Y^{\text{an}}, \cH_Y) \ar[rr]^-{\left(\Phi^{\text{an}} \right)^*}_-{\simeq}& & \Gamma(X^{\text{an}},\cH_X)
}
\end{gathered}
\end{equation} 
Note that all the vertical arrows are injective. The commutativity of the diagram implies that $\Phi^*$ is injective. \\
\hf We first show that $\Phi$ is almost surjective. Let $f,g \in A(Y)$ with $f \circ \Phi \mid g \circ \Phi$ in $A(X)$. Then there exists $h \in A(X)$ such that $g \circ \Phi= h \cdot \left( f \circ \Phi \right)$. If $ f \circ \Phi=0$, then $ g \circ \Phi=0$, and hence by injectivity of $\Phi^*$, we conclude that both $f,g$ are zero. Thus $f \mid g$ in this case. Now assume $ f \circ \Phi \neq 0$. Since $\left(\Phi^{\text{an}} \right)^*$ is an isomorphism, there exists $p \in \Gamma(Y^{\text{an}}, \cH_Y)$ such that $p \circ \Phi = h$. We deduce that $g=pf$ in $\Gamma(Y^{\text{an}}, \cH_Y)$. Therefore $f \mid g$ in $\cH_{Y,y}$ for all $y \in Y$. Hence $f \mid g $ in $\widehat{\cH}_{Y,y}$, the completion of the local ring $\cH_{Y,y}$ with respect to its unique maximal ideal. If we denote the completion of the local ring $\cO_{Y,y}$ with respect to its unique maximal ideal by $\widehat{\cO}_{Y,y}$, then $\widehat{\cH}_{Y,y}$ and $\widehat{\cO}_{Y,y}$ are naturally isomorphic (cf. \cite{GAGA}). We conclude that $f \mid g $ in $\widehat{\cO}_{Y,y}$. Again, by \cite[Lemma 1.29, Chapter 1]{Mumford}, $f \mid g $ in $\cO_{Y,y}$. Considering $g/f$ as an element of $k(Y)$, the function field of $Y$, we conclude that $g/f \in \cO_{Y,y}$ for all $y \in Y$. Hence $g/f \in A(Y)$, that is, $f \mid g$ in $A(Y)$. By \Cref{equivalent_almost_surjective_lemma}, $\Phi$ is almost surjective.\\
\hf To prove $\Phi$ is biregular, it is enough to show that $\Phi^*$ is bijective. As discussed earlier, $\Phi^*$ is injective. To show $\Phi^*$ is surjective, let $r \in A(X)$. As $\left(\Phi^{\text{an}} \right)^*$ is bijective, there exist $s \in \Gamma(Y^{\text{an}}, \cH_Y) $ such that $s \circ \Phi =r$. By \Cref{fun_comp_thm}, there exists $t \in A(Y)$ such that $t \circ \Phi=s \circ \Phi$, that is, $t \circ \Phi=r$. Surjectivity of $\Phi^*$ follows.
\end{proof}

\section*{Acknowledgment} \noindent The author would like to thank Chitrabhanu Chaudhuri, Ritwik Mukherjee, and Zbigniew Jelonek for several fruitful discussions and comments. Conversations with Erhard Aichinger over email were helpful. Special thanks are due to him for pointing out reference \cite{Aichinger}, which the author had looked at for a different purpose before beginning the project. The author is grateful to Chitrabhanu Chaudhuri for providing an example to sort out the mistake that the author had made earlier. This work is supported by the INSPIRE faculty fellowship (Registration No.: IFA21-MA 161) funded by the DST, Govt. of India. The author is also thankful to the anonymous referee for suggesting several improvements to the earlier manuscript.

\bibliographystyle{abbrv}
\bibliography{ref}
\end{document}